\theoremstyle{plain}
\newtheorem{thm}{Theorem}
\newtheorem{lem}{Lemma}
\newtheorem{prop}{Proposition}
\newtheorem{cor}{Corollary} 
\theoremstyle{remark}
\numberwithin{equation}{section}
\def\ens{\ensuremath}                                     
\newcommand\mtb[1]{\ens{\mathbb{#1}}}                     
\newcommand{\N}{\mtb{N}}	   \newcommand{\Q}{\mtb{Q}}   	\newcommand{\R}{\mtb{R}}	
\newcommand{\T}{\mtb{T}}
\newcommand\mtc[1]{\ens{\mathcal{#1}}}           
  \newcommand{\bet}{\ens{\beta}}
\newcommand{\gam}{\ens{\gamma}}	\newcommand{\lam}{\ens{\lambda}}
\newcommand\bld[1]{\ens{\boldsymbol{#1}}}
\newcommand{\card}{{\bf card}}                       
\newcommand\hl[1]{{\center \color{red} \ens{\bs\bs\bs} \\}} 
\newcommand\hsp[1]{\mbox{}\hspace{#1mm}} 
\newcommand\vsp[1]{\par \vspace{#1mm}} 
\def\bs{{\bigstar}}                     
\renewcommand{\v}{\vsp}
\newcommand{\h}{\hsp}
\newcommand{\rank}{\ens{\mathbf{rank}}}
\newcommand{\spann}{\ens{\mathbf{span}}}
\newcommand{\SAF}{\ens{\mathrm{SAF}}}
\newcommand{\iets}{\mtc G}
\newcommand{\dom}{\mathrm{domain}}
\newcommand{\dis}{\mathrm{disc}}
\newcommand{\ST}{\mtb K}
\title[Subgroup of interval exchanges]{Subgroup of interval exchanges \\ generated by
torsion elements \\ and rotations}
\author[M. Boshernitzan {}]{Michael Boshernitzan {}}
\address{Department of Mathematics, Rice University, Houston, TX~77005, USA}
\email{michael@rice.edu}
\thanks{The author was supported in part by research grant: NSF-DMS-1102298}
\date{Aug 3, 2012}
\begin{document}
\maketitle
\begin{abstract}
Denote by $G$ the group of interval exchange transformations (IETs) on the unit 
interval. Let $G_{per}\subset G$ be the subgroup generated by torsion 
elements in $G$ (periodic IETs), and let $G_{rot}\subset G$ be the subset 
of $2$-IETs (rotations). 

The elements of the subgroup $G_1=\langle G_{per},G_{rot}\rangle\subset G$ 
(generated by the sets $G_{per}$ and $G_{rot}$) are characterized constructively 
in terms of their Sah-Arnoux-Fathi (SAF) invariant. The characterization implies 
that a non-rotation type $3$-IET  lies in $G_1$ if and only if the lengths of its 
exchanged intervals are linearly dependent over $\Q$. In particular, 
$G_1\subsetneq G$. 

The main tools used in the paper are the SAF invariant and a recent result by 
\mbox{Y.~Vorobets} that $G_{per}$ coincides with the commutator subgroup of $G$.
\end{abstract}

\section{A group of IETs}\label{sec:iets}
Denote by $\R$, $\Q$, $\N$ the sets of real, rational and natural numbers. 
 By a {\em standard interval} we mean a
finite interval of the form $X=[a,b)\subset\R$ (left closed - right open). 
We write $|X|=b-a$ for its length. 

By an IET (interval exchange transformation) we mean a pair $(X,f)$ where
$X=[a,b)$ is a standard interval and $f$ is a right continuous bijection 
$f\colon X\to X$ with a finite set $D$ of discontinuities and such that the 
translation function  $\gam(x)=f(x)-x$ is piecewise constant. 

The map $f$ itself is often referred to as IET, and then $X=\dom(f)$ and 
$D=\dis(f)$ denote the domain (also the range) of $f$ and the discontinuity 
set of $f$, respectively.

Given an IET  $f\colon X\to X$, the set $\dis(f)$ partitions $X$ into 
a finite number of subintervals  $X_k$ in such a way that $f$  
restricted to each $X_k$ is a translation
\begin{equation}\label{eq:frest}
f|_{X_k}: x\to x+\gam_k,
\end{equation}
so that the action of $f$ reduces to a rearrangement of the intervals $X_k$.
The number $r$ of exchanged intervals  $X_k$ can be specified by calling $f$ 
an $r$-IET. 

Denote by $\iets$ the set of IETs. Then
\begin{equation}\label{eq:iets}
\iets=\bigcup_{r\geq1}\,\iets_r, 
\end{equation}
where $\iets_r$ stands for the set of $r$-IETs:
\begin{equation}\label{eq:riets}
      \iets_r=
      \{f\in\iets\mid \card(\dis(f))=r-1\,\}.
\end{equation} 

For a standard interval $X=[a,b)$, the subset 
\begin{equation}\label{eq:gx}
G(X)=\{f\in\iets\mid \dom(f)=X\}
\end{equation}
forms a group under composition (of bijections of $X$). Its identity is
$\bld1_{G(X)}\in\iets_1$, the identity map on $X$.
Given two standard intervals $X$ and $Y$, there is a canonical isomorphism
\begin{equation}\label{eq:phi}
\phi_{X,Y}: G(X)\to G(Y)
\end{equation}
defined by the formula
\begin{equation}\label{eq:is2}
\phi_{X,Y}(f)=\l\circ f\circ\l^{-1}\in G(Y),\quad\text{for }\,f\in G(X),
\end{equation}
where $\l=\l_{X,Y}$ stands for the unique affine order preserving 
bijection $X\to Y$. 

By the group of IETs we mean the group\, $G:=G([0,1))$.
(It is isomorphic to $G(X)$, for any standard interval $X$).

The interval exchange transformations have been a popular subject of study in 
ergodic theory. (We refer the reader to the book \cite{Via} by Marcelo Viana 
which may serve a nice introduction and survey reference in the subject). Most 
papers on IETs study these as dynamical systems; they concern specific 
dynamical properties (like minimality, ergodicity, mixing properties etc.) 
the IETs may satisfy.

The focus of the present paper is different; we address certain questions on the
group-theoretical structure of the group $G$ of IETs. 
(For recent results on this general area see \cite{Vor} and \cite{Nov}).
In particular, we discuss possible generator subsets of the group $G$.

It was known for a while that the subgroup $G_{per}$ generated by periodic IETs forms 
a proper subgroup of $G$; in particular, $G_{per}$ contains no irrational rotations. 
(The SAF invariant introduced in the next section vanishes on $G_{per}$ but has non-zero 
value on irrational rotations).
On the other hand, the set $G_{per}$ contains some uniquely ergodic,
even pseudo-Anosov (self-similar) IETs (see \cite{AY}).

We show that the subgroup $G_1=\langle G_{per}, G_{rot}\rangle$ generated by 
$G_{per}$ and the set of rotations $G_{rot}\subset G$ 
is still a proper subgroup of $G$. On the other hand, $G_1$ is large enough
to contain all  rank $2$ IETs (see Section \ref{sec:rank2}), 
in particular the IETs over quadratic number fields.

We present a constructive criterion (in terms of the SAF invariant)
for a given IET to lie in~$G_{1}$. It follows from this criterion that
a $3$-IET lies in $G_1$ if and only if the lengths of its exchanged subintervals
are linearly independent.
Note that $3$-IETs generate the whole group of IETs. (More precisely, $\iets_3\cap G$ is 
a generating set for the group $G$).
\section{The $\SAF$ invariant and subgroups of $G(X)$}
Throughout the paper (whenever vector spaces or linear dependence/independence 
are discussed) the implied field (if not specified) is always meant to be $\Q$, 
the field of rationals.

Denote by $\T$ the tensor product of two copies of reals viewed
as vector spaces (over $\Q$). 
Denote by $\ST$ the skew symmetric tensor product of two copies of reals:
\begin{equation*}
\T:=\R\otimes_{\Q}\R; \qquad  \ST:=\R\wedge_{\Q}\R\subset\T.
\end{equation*}
Recall that $\ST$ is the vector subspace of $\T$ spanned by 
the wedge products
\begin{equation*}
u\wedge v:= u\otimes v-v\otimes u, \qquad u,v\in\R.
\end{equation*}

The {\em Sah-Arnoux-Fathi (SAF) invariant} (sometimes also called the 
{\em scissors congruence invariant}) of 
$f\in\iets_{r}$ is defined by the formula                  
\begin{equation}\label{eq:saff}
\SAF(f):=\sum_{k=1}^r\, \lam_k\otimes\gam_{k}\in \T, 
\end{equation}
where the vectors $\vec \lam=(\lam_1,\lam_2,\ldots,\lam_r,)$, 
$\vec \gam=(\gam_1,\gam_2,\ldots,\gam_r,)\in\R^r$ encode the lengths 
\mbox{$\lam_k=|X_k|$} of exchanged intervals $X_k$ and the corresponding 
translation constants $\gam_{k}$, respectively (see \eqref{eq:frest}).

The SAF invariant was introduced independently by Sah \cite{Sah} and Arnoux and 
Fathi \cite{Arn}. The following lemma makes this invariant a useful tool in 
the study of IETs.

\begin{lem}\label{lem:hom}
Let $X$ be a standard interval. Then 
\begin{itemize}
\item[(a)] $\SAF\colon G(X)\to \T$ is a group homomorphism;\v2
\item[(b)] $\SAF(G(X))=\ST\subset\T$;\v3
\item[(c)] $\SAF\colon G(X)\to \ST$ is a surjective group homomorphism. 
\item[]\h3(This summarizes (a) and (b)).
\end{itemize}¥
\end{lem}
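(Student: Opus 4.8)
The plan is to prove the three parts essentially in the order (a), (b), (c), since (c) is just the conjunction of (a) and (b). The whole argument rests on expressing the SAF invariant in a way that makes additivity under composition transparent.

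\textbf{Part (a).} First I would rewrite $\SAF(f)=\sum_k \lam_k\otimes\gam_k$ in the more symmetric form
\begin{equation*}
\SAF(f)=\sum_{k=1}^r |X_k|\otimes\gam_k = \sum_{k=1}^r \int_{X_k}\!\! 1 \,dx \,\otimes\, \gam_k,
\end{equation*}
and observe that this equals $\int_X x\,\wedge\,1\; dx$ interpreted suitably; more concretely, because $\gam_k=f(x)-x$ is constant on $X_k$, one has the clean expression as a sum over the translation pieces. The key computation is to show additivity: given $f,g\in G(X)$, I want $\SAF(g\circ f)=\SAF(f)+\SAF(g)$. The point is that the translation constant of $g\circ f$ on a piece where $f$ adds $\gam$ and then $g$ adds $\del$ is simply $\gam+\del$. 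Refining the partition of $X$ so that both $f$ and $g\circ f$ are translations on each common subinterval $X_j$, and writing $y_j=f(x)$ for $x\in X_j$, the contribution of $X_j$ to $\SAF(g\circ f)$ is $|X_j|\otimes(\gam_j+\del_j)=|X_j|\otimes\gam_j+|X_j|\otimes\del_j$, using bilinearity of $\otimes$. Summing the first terms over $j$ recovers $\SAF(f)$; summing the second terms, and using that $f$ is a bijection preserving lengths on each piece (so $|X_j|=|f(X_j)|$) so that the $\del_j$-contributions reassemble into $\SAF(g)$, recovers $\SAF(g)$. This bilinearity-plus-length-preservation bookkeeping is the main (if routine) obstacle, and care is needed in refining partitions compatibly.

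\textbf{Part (b).} Here I must show $\SAF(G(X))=\ST$, i.e. that the image is exactly the skew-symmetric tensors. The inclusion $\SAF(G(X))\subseteq\ST$ follows from a reflection/symmetry argument: one checks that each translation piece contributes a skew-symmetric element, or more directly that the total tensor $\sum_k\lam_k\otimes\gam_k$ is antisymmetric because $\sum_k\lam_k\gam_k=0$ (the translation constants must balance: $\sum_k\lam_k\gam_k=\int_X(f(x)-x)\,dx=\int_X f(x)\,dx-\int_X x\,dx=0$ since $f$ is a measure-preserving bijection of $X$). This vanishing of the symmetric ``trace'' is precisely what places $\SAF(f)$ in $\ST=\R\wedge_\Q\R$. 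For the reverse inclusion $\ST\subseteq\SAF(G(X))$, I would exhibit, for any wedge $u\wedge v$ with $u,v>0$, an explicit IET whose SAF invariant is $u\wedge v$; the natural candidate is a suitable $3$-IET or a composition of two rotations engineered so that $\lam_1\otimes\gam_1+\lam_2\otimes\gam_2$ telescopes to $u\otimes v-v\otimes u$. Since wedges span $\ST$ and by part (a) the image is a subgroup (indeed a $\Q$-subspace, as $\SAF$ is $\Q$-linear in the tensor), generating all wedges suffices.

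\textbf{Part (c)} is immediate: by (a) the map is a group homomorphism, and by (b) its image is exactly $\ST$, so as a map $G(X)\to\ST$ it is a surjective homomorphism. I expect the genuine content to lie in the surjectivity half of (b)---constructing IETs realizing arbitrary wedge products---while additivity in (a) and the containment in $\ST$ are bilinear bookkeeping resting on the length-preservation identity $\sum_k\lam_k\gam_k=0$.
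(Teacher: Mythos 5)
Your part (a) is essentially the standard refinement argument (choose a common partition on which $f$ and $g\circ f$ are translations, use bilinearity of $\otimes$ and $|f(X_j)|=|X_j|$), and it is correct; part (c) is indeed trivial given (a) and (b). For the record, the paper itself does not prove this lemma at all --- it defers to Sah and to Vorobets \cite{Vor} --- so the comparison below is with the standard argument. Both halves of your part (b) have genuine gaps.

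For the containment $\SAF(G(X))\subseteq\ST$ you claim that the scalar identity $\sum_k\lam_k\gam_k=0$ ``is precisely what places $\SAF(f)$ in $\ST$''. That inference is false. The identity $\sum_k\lam_k\gam_k=0$ only says that $\SAF(f)$ lies in the kernel of the multiplication map $m\colon\R\otimes_\Q\R\to\R$, $u\otimes v\mapsto uv$, and that kernel is strictly larger than $\ST$: for instance
\[
\sqrt2\otimes\sqrt3+\sqrt3\otimes\sqrt2-1\otimes\sqrt6-\sqrt6\otimes1
\]
is a symmetric element of $\ker m$ which is nonzero in $\T$ (apply Lemma \ref{lem:vorob} to the independent numbers $1,\sqrt2,\sqrt3,\sqrt6$). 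Membership in $\ST$ requires the \emph{tensor} identity $\sum_k(\lam_k\otimes\gam_k+\gam_k\otimes\lam_k)=0$, which does not follow from the scalar one. The correct argument is combinatorial: if $\pi$ is the permutation of $f$, then
\[
\gam_k=\sum_{j\,:\,\pi(j)<\pi(k)}\lam_j-\sum_{j\,:\,j<k}\lam_j,
\qquad\text{so}\qquad
\SAF(f)=\sum_{k,j}c_{kj}\,\lam_k\otimes\lam_j
\]
with integer coefficients $c_{kj}$ satisfying $c_{kk}=0$ and $c_{kj}+c_{jk}=0$ (for $j\neq k$, exactly one of $\pi(j)<\pi(k)$, $\pi(k)<\pi(j)$ holds, and exactly one of $j<k$, $k<j$ holds). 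Hence $\SAF(f)=\sum_{k<j}c_{kj}\,\lam_k\wedge\lam_j\in\ST$.

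For surjectivity, the one concrete family of witnesses you name cannot work: a composition of rotations of $X$ has, by part (a) and Lemma \ref{lem:safrot}, its invariant in $\ST(|X|)=\{|X|\wedge t\mid t\in\R\}$, which is a proper subspace of $\ST$ --- indeed, the whole point of this paper (Theorem \ref{thm:g1saf}) is that even all of $G_{per}(X)$ together with the rotations only reaches $\ST(|X|)$. A construction that does work: rotate a standard subinterval $Y\subset X$ and extend by the identity; this IET on $X$ has invariant $|Y|\wedge t$ with $0<t<|Y|$, so the image contains every wedge $s\wedge t$ with $0<t<s<|X|$. One then reduces an arbitrary wedge $u\wedge v$ to this normal form using $q(u\wedge v)=(qu)\wedge v$, $u\wedge v=u\wedge(v-qu)$, and the density of $\{v-qu\mid q\in\Q\}$, and concludes by the subgroup property from (a), since every element of $\ST$ is a finite sum of wedges. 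Your sketch contains neither the subinterval construction nor this reduction, so as written the surjectivity half is both incomplete and anchored to a candidate that provably fails.
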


\subsection{Subgroups of $G(X)$}
Let $X$ be a standard interval. An IET $f\in G(X)$ is called periodic if $f$ is 
an element of finite order in the group $G(X)$  (defined in \eqref{eq:gx}). Set
\begin{equation}\label{eq:rrr}
\iets_r(X):=\iets_r\cap G(X), \quad r\geq1,
\end{equation}
(see notation \eqref{eq:iets}). 
Note that $\iets_1(X)=\{\bold1_{G(X)}\}$ is a singleton.

\pagebreak[3]
Consider the following subgroups of $G(X)$:
\begin{itemize}
\item $G'(X)=[G(X),G(X)]$ is the commutator subgroup of $G$ (generated by 
	  the \\ \hsp3 commutators $f^{-1}g^{-1}fg$, with $f,g\in G(X))$;\v3
\item $G_{per}(X)$ is the subgroup of $G(x)$ generated by 
      periodic IETs $f\in G(X)$;\v5
\item $G_0(X)=\{f\in G(X)\mid \SAF(f)=0\}$ (see \eqref{eq:saff});\v3
\item $G_{rot}(X)=\iets_1(X)\cup\iets_2(X)=\iets_2(X)\cup\{\bld1_{G(X)}\}$
      is the group of rotations on $X$.\\
      (It contains  all IETs  $f\in G(X)$ with at most one discontinuity).
\end{itemize}

Observe that (a) in Lemma \ref{lem:hom} implies immediately the inclusions 
\[
G'(X)\subset G_0(X); \quad  G_{per}(X)\subset G_0(X),
\]
as well as 
the fact that the set $G_0(X)$ forms a subgroup of $G(X)$.

An unpublished theorem of Sah \cite{Sah} (mentioned by Veech in \cite{Ve_SAF}) 
contains Lemma~\ref{lem:hom} and  the equality $G'(X)=G_0(X)$. 
This equality has been recently extended by Vorobets \cite{Vor}
to also include $G_{per}(X)$ as an additional set:
\begin{equation}\label{eq:3g}
G'(X)=G_0(X)=G_{per}(X).
\end{equation}
We refer to Vorobets's paper \cite{Vor} for a nice self-contained 
introduction to the SAF invariant. In particular, the paper contains 
the proof of Lemma \ref{lem:hom} and of the equality \eqref{eq:3g}.

The results of the present paper concern the subgroup
\begin{equation}\label{eq:g1x}
G_1(X)=\langle G_{per}(X),G_{rot}(X)\rangle\subset G(X) 
\end{equation}
generated by periodic IETs and the rotations in $G(X)$. The elements of $G_1(X)$ are 
classified in terms of their SAF invariant, see Theorem \ref{thm:g1saf}. 
It is shown that ``most'' $3$-IETs do not lie in $G_1(X)$ (Lemma \ref{lem:ex3} and
Theorem \ref{thm:crit}). In particular, it follows that $G_1(X)\neq G(X)$.
\section{Subgroup $G_1$ and its $\SAF$ invariant characterization.}
For $u\in\R$, denote 
\[
\ST(u)=\{u\wedge v\mid v\in\R\}\subset\ST=\R\wedge_{\Q}\R.
\]
$\ST(u)$ forms a vector subspace of $\ST$. If $u\neq0$, $\ST(u)$ is isomorphic to  $\R/\Q$.

In the next lemma we compute the SAF invariant for $2$-IETs. This  is known
and follows immediately from the definition \eqref{eq:saff}, but is included for 
completeness.

Recall that $\iets_r(X)=\iets_r\cap G(X)$ stands for the set of $r$-IETs on $X$.
\begin{lem}\label{lem:safrot} Let $X$ be a standard interval. Assume that 
$f\in \iets_2(X)$  exchanges two subintervals of lengths $\lam_1$ and $\lam_2$  
(with $\lam_1+\lam_2=|X|$). Then    
\[
\SAF(f)=|X|\wedge\lam_{1}\in \ST(|X|).
\]
\end{lem}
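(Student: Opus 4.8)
The plan is to compute the SAF invariant directly from the definition \eqref{eq:saff} for a $2$-IET. A $2$-IET $f\in\iets_2(X)$ exchanges two subintervals of $X=[a,b)$; write the first subinterval as $X_1$ of length $\lam_1$ and the second as $X_2$ of length $\lam_2$, where $\lam_1+\lam_2=|X|$. Since $f$ swaps these two blocks, the left block of length $\lam_1$ is moved to the right (shifted by $+\lam_2$) and the right block of length $\lam_2$ is moved to the left (shifted by $-\lam_1$). Thus the translation constants are $\gam_1=\lam_2$ and $\gam_2=-\lam_1$.

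Substituting into \eqref{eq:saff} gives
\begin{equation*}
\SAF(f)=\lam_1\otimes\gam_1+\lam_2\otimes\gam_2
       =\lam_1\otimes\lam_2-\lam_2\otimes\lam_1
       =\lam_1\wedge\lam_2.
\end{equation*}
The remaining work is to rewrite $\lam_1\wedge\lam_2$ in the claimed form $|X|\wedge\lam_1$. Using $\lam_2=|X|-\lam_1$ and the bilinearity of the wedge product,
\begin{equation*}
\lam_1\wedge\lam_2=\lam_1\wedge(|X|-\lam_1)
       =\lam_1\wedge|X|-\lam_1\wedge\lam_1
       =-\,|X|\wedge\lam_1,
\end{equation*}
where $\lam_1\wedge\lam_1=0$ by skew-symmetry. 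This leaves a sign discrepancy with the stated conclusion, which I would resolve by fixing the orientation convention: depending on which block one labels $X_1$ (equivalently, whether one records $\SAF(f)=\lam_1\wedge\lam_2$ or $\lam_2\wedge\lam_1$), the answer is $\pm|X|\wedge\lam_1$, and the normalization in the lemma corresponds to one specific choice. The expression manifestly lies in $\ST(|X|)=\{|X|\wedge v\mid v\in\R\}$ since it is a wedge product with first factor $|X|$.

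There is no genuine obstacle here; the only point requiring care is the bookkeeping of signs and the labeling convention for the two exchanged intervals, so that the computed $\pm|X|\wedge\lam_1$ matches the sign stated in the lemma. This is precisely why the lemma is flagged as routine and "included for completeness." I would simply fix conventions at the outset so that the direct computation lands on $|X|\wedge\lam_1$ on the nose.
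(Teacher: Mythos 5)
Your proposal is correct and is essentially the paper's own proof: a direct computation of the translation constants followed by substitution into the definition \eqref{eq:saff} and a rewrite using $\lam_1+\lam_2=|X|$. The sign discrepancy you flag is genuine but purely a matter of labeling, exactly as you say --- the paper implicitly puts the block of length $\lam_1$ on the right (its constants are $\gam_1=-\lam_2$, $\gam_2=\lam_1$, i.e.\ $f$ is ``rotation by $\lam_1$''), which gives $\lam_2\wedge\lam_1=|X|\wedge\lam_1$ on the nose, while your left-block labeling gives $\lam_1\wedge\lam_2=|X|\wedge\lam_2=-|X|\wedge\lam_1$; either way the invariant lies in $\ST(|X|)$, which is all that is used in the sequel.
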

\begin{proof}
For such $f$ the traslation constants are $\gam_1=-\lam_2$ and 
$\gam_2=-\lam_2+1=\lam_1$  (see \eqref{eq:frest}). It follows 
that
\begin{align*}
\SAF(f)=\lam_1\otimes\gam_1+\lam_2\otimes\gam_2&=
      \lam_1\otimes(-\lam_2)+\lam_2\otimes\lam_1=\\
      &=\lam_2\wedge\lam_1=(\lam_2+\lam_1)\wedge\lam_1=|X|\wedge\lam_{1}.
\end{align*}
\end{proof} 
\begin{lem}\label{lem:safrot2}
Let $X$ be a standard interval. Let $\bet\in\ST(|X|)$. Then there 
exists $f\in \iets_2(X)$  such that $\SAF(F)=\bet$.
\end{lem}
\begin{proof}
Let $\bet=|X|\wedge t$. Select a rational $r\in\Q$  such that $0<r|X|+t<|X|$.
Set $\lam_1=r|X|+t$ and $\lam_2=|X|-\lam_1$. Take $f\in\iets_2(X)$ which exchanges
two intervals of lengths $\lam_1$  and $\lam_2$. Then, by Lemma \ref{lem:safrot},
\[
\SAF(f)=|X|\wedge\lam_1=|X|\wedge(r|X|+t)=|X|\wedge t,
\]
completing the proof.
\end{proof}
\begin{cor}\label{cor:rot}
Let $X$ be a standard interval. Then
\[
\SAF(G_{rot}(X))=\SAF(\iets_{2}(X))=\ST(|X|).
\]
\end{cor}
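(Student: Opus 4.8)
The plan is to deduce both equalities directly from the two preceding lemmas, since the corollary is essentially a repackaging of them. First I would record that, by definition, $G_{rot}(X)=\iets_1(X)\cup\iets_2(X)$, so that
\[
\SAF(G_{rot}(X))=\SAF(\iets_1(X))\cup\SAF(\iets_2(X)).
\]
The set $\iets_1(X)$ consists solely of the identity $\bld1_{G(X)}$, whose translation constants all vanish, so $\SAF(\bld1_{G(X)})=0=|X|\wedge0\in\ST(|X|)$. Thus the $\iets_1$-part contributes only $\{0\}$, which already sits inside $\ST(|X|)$ and therefore cannot enlarge the image.

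Next I would establish $\SAF(\iets_2(X))=\ST(|X|)$ by a two-sided inclusion. The inclusion $\SAF(\iets_2(X))\subseteq\ST(|X|)$ is exactly the content of Lemma \ref{lem:safrot}, which computes $\SAF(f)=|X|\wedge\lam_1$ for every $2$-IET $f$ exchanging subintervals of lengths $\lam_1,\lam_2$. The reverse inclusion $\ST(|X|)\subseteq\SAF(\iets_2(X))$ is the surjectivity furnished by Lemma \ref{lem:safrot2}: every $\bet\in\ST(|X|)$ is realized as $\SAF(f)$ for a suitable $f\in\iets_2(X)$.

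Combining the two inclusions gives $\SAF(\iets_2(X))=\ST(|X|)$. Since $\{0\}\subseteq\ST(|X|)$, the union displayed above collapses to $\SAF(G_{rot}(X))=\ST(|X|)$ as well, yielding the full chain of equalities. I expect no genuine obstacle here beyond correctly assembling the two lemmas; the only point meriting a moment's care is verifying that the identity's contribution $\{0\}$ already lies in $\ST(|X|)$, so that enlarging $\iets_2(X)$ to all of $G_{rot}(X)$ leaves the $\SAF$-image unchanged.
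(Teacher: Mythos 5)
Your proof is correct and follows essentially the same route as the paper, which simply cites Lemmas \ref{lem:safrot} and \ref{lem:safrot2} for the two inclusions; you merely spell out the details, including the (harmless) observation that the identity contributes $\SAF(\bld1_{G(X)})=0\in\ST(|X|)$, so passing from $\iets_2(X)$ to all of $G_{rot}(X)$ does not change the image.
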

\begin{proof}
Follows from Lemmas \ref{lem:safrot} and  \ref{lem:safrot2}.
\end{proof}
\begin{thm}\label{thm:gh}
Let $X$ be a standard interval and let $f\in G(X)$. Assume that 
$\SAF(f)\in \ST(|X|)$. Then there exists a rotation $g\in\iets_{2}(X)$ and
$h_1,h_2\in G_{per}(X)$  such that
\[
f=g\circ h_2=h_1\circ g.
\]
\end{thm}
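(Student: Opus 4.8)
The plan is to realize $f$ as a rotation composed with a SAF-trivial map, and to read off both factorizations at once from the fact that $\SAF$ takes values in an abelian group. The entire argument rests on three facts already established: that $\SAF\colon G(X)\to\T$ is a homomorphism (Lemma \ref{lem:hom}(a)), that $\SAF(\iets_2(X))=\ST(|X|)$ (Corollary \ref{cor:rot}), and that $G_0(X)=G_{per}(X)$ (Vorobets's identity \eqref{eq:3g}).

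First I would invoke the hypothesis $\SAF(f)\in\ST(|X|)$ together with the surjectivity in Corollary \ref{cor:rot} to produce a single rotation $g\in\iets_2(X)$ with $\SAF(g)=\SAF(f)$. This one $g$ is what will serve in \emph{both} of the claimed decompositions.

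Next I would set $h_2:=g^{-1}\circ f$ and $h_1:=f\circ g^{-1}$, so that the identities $f=g\circ h_2=h_1\circ g$ hold by construction, and it remains only to verify that $h_1,h_2\in G_{per}(X)$. Since $\SAF$ is a homomorphism into the abelian space $\T$, I compute
\[
\SAF(h_2)=\SAF(g^{-1})+\SAF(f)=-\SAF(g)+\SAF(f)=0,
\]
and likewise $\SAF(h_1)=\SAF(f)+\SAF(g^{-1})=0$. By \eqref{eq:3g}, namely $G_0(X)=G_{per}(X)$, both $h_1$ and $h_2$ then lie in $G_{per}(X)$, which finishes the proof.

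There is no genuine obstacle here; the statement is essentially a bookkeeping consequence of Corollary \ref{cor:rot} and Vorobets's equality. The only point that deserves a moment's attention is that the \emph{same} rotation $g$ works for both the left and the right factorization: this is precisely because $\T$ is abelian, so that $\SAF(g^{-1}\circ f)$ and $\SAF(f\circ g^{-1})$ agree (both equal $\SAF(f)-\SAF(g)$) regardless of the side on which $g^{-1}$ is composed.
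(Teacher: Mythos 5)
Your proof is correct and follows essentially the same route as the paper: you produce $g\in\iets_2(X)$ with $\SAF(g)=\SAF(f)$ (via Corollary \ref{cor:rot}, which is just the packaged form of Lemma \ref{lem:safrot2} that the paper cites directly), define $h_1=f\circ g^{-1}$ and $h_2=g^{-1}\circ f$, and conclude $h_1,h_2\in G_0(X)=G_{per}(X)$ from the homomorphism property and \eqref{eq:3g}. The only difference is that you spell out the SAF computation and the role of commutativity of $\T$, which the paper leaves implicit.
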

Note that in the above theorem $G_{per}(X)$ can be replaced by $G'(X)$ or  $G_0(X)$
(see~\eqref{eq:3g}).
\begin{proof}[Proof of Theorem \em\ref{thm:gh}]

By Lemma \ref{lem:safrot2} there exists $g\in\iets_2(X)$ such that 
$\SAF(g)=\SAF(f)\in \ST(|X|)$. Take
$
h_1=f\circ g^{-1},\ h_2=g^{-1}\circ f.
$
Then $h_1,h_2\in G_0(X)=G_{per}(X)$ in view of Lemma~\ref{lem:hom}.
\end{proof}

\begin{thm}\label{thm:g1saf}
Let $X$ be a standard interval and let $f\in G(X)$. Then 
\[
f\in G_1(X)\ \Longleftrightarrow\  \SAF(f)\in \ST(|X|).
\]
\end{thm}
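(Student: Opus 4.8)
The plan is to prove both implications of the biconditional, using the SAF invariant as the key bookkeeping device.

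First I would prove the forward implication ($\Rightarrow$), which I expect to be the easier direction. Suppose $f \in G_1(X) = \langle G_{per}(X), G_{rot}(X)\rangle$. Then $f$ is a finite composition of elements drawn from $G_{per}(X)$ and $G_{rot}(X)$. Since $\SAF$ is a group homomorphism by Lemma~\ref{lem:hom}(a), I would apply it to this composition and get that $\SAF(f)$ is a sum of terms of the form $\SAF(h)$ with $h \in G_{per}(X)$ and $\SAF(g)$ with $g \in G_{rot}(X)$. By \eqref{eq:3g} every $h \in G_{per}(X) = G_0(X)$ satisfies $\SAF(h) = 0$, so those terms drop out. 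By Corollary~\ref{cor:rot}, each surviving term $\SAF(g)$ lies in the subspace $\ST(|X|)$. Since $\ST(|X|)$ is a vector subspace of $\ST$, it is closed under addition, so the sum lies in $\ST(|X|)$; hence $\SAF(f) \in \ST(|X|)$, as required.

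For the reverse implication ($\Leftarrow$), suppose $\SAF(f) \in \ST(|X|)$. Here the work has essentially already been packaged into Theorem~\ref{thm:gh}: its hypothesis is exactly $\SAF(f) \in \ST(|X|)$, and its conclusion produces a rotation $g \in \iets_2(X) \subset G_{rot}(X)$ together with $h_1 \in G_{per}(X)$ such that $f = h_1 \circ g$. Since both factors lie in the generating sets of $G_1(X)$, their composition lies in $G_1(X)$, giving $f \in G_1(X)$.

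Combining the two implications yields the stated equivalence. The main obstacle in this argument is not in either direction as stated, but rather resides upstream: the substantive content is concentrated in Theorem~\ref{thm:gh}, whose proof in turn relies on Vorobets's identification \eqref{eq:3g} of $G_{per}(X)$ with $G_0(X)$ and on the surjectivity result of Lemma~\ref{lem:safrot2}. Given those inputs, the proof of the present theorem is a short assembly: the forward direction is the homomorphism property applied to a product of generators, and the reverse direction is a direct invocation of Theorem~\ref{thm:gh}. I would therefore keep this proof brief and explicitly cite Lemma~\ref{lem:hom}, Corollary~\ref{cor:rot}, \eqref{eq:3g}, and Theorem~\ref{thm:gh} at the appropriate points.
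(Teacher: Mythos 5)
Your proposal is correct and follows the same route as the paper: the forward direction uses the homomorphism property of $\SAF$ together with $\SAF(G_{per}(X))=0$ (via \eqref{eq:3g}) and $\SAF(G_{rot}(X))=\ST(|X|)$ (Corollary~\ref{cor:rot}), and the reverse direction is exactly the invocation of Theorem~\ref{thm:gh}. Your write-up just spells out more explicitly the factorization-into-generators argument that the paper compresses into the phrase ``in view of \eqref{eq:g1x}, it is enough to show'' the two inclusions.
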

\begin{proof} Direction $\Rightarrow$. In view of \eqref{eq:g1x}, it is enough
to show the following two inclusions: 
\[
S_1=\SAF(G_{rot}(X))\in \ST(|X|); \quad S_2=\SAF(G_0(X))\in \ST(|X|).
\]
Both are immediate: $S_{1}=\ST(|X|)$ by Corollary \ref{cor:rot}, and $S_2=0$.

Direction $\Leftarrow$. Follows from Theorem \ref{thm:gh}.
\end{proof}

\section{Rank 2 IETs lie in $G_{1}$}\label{sec:rank2}
By the span of a subset  $A\subset\R$ (notation: $\spann(A)$) we mean
the minimal vector subspace of $\R$ (over $\Q$) containing $A$.

By the rank of an IET $f$ (notation $\rank(f))$ we mean the dimension of 
the span of the set the lengths of the intervals exchanged by $f$:
\[
       \rank(f):=\dim_{Q}(L(f)),
\]
where
\begin{equation}\label{eq:lf}
       L(f):=\spann(\{\lam_1,\lam_2,\ldots,\lam_r\}) \quad \text{(if $f\in\iets_r$).}
\end{equation}

The following implication is immediate: \
 $\rank(f)=1\ \implies\  f\in G_{per}=G_{0}$.
\begin{thm}\label{thm:r2}
For $f\in G(X)$, the following implication holds:  
\[
\rank(f)\leq2 \quad\implies\quad f\in G_{1}(X).
\]
\end{thm}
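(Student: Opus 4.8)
The plan is to reduce the rank-$2$ case to Theorem~\ref{thm:g1saf} by showing that if $\rank(f)\leq 2$ then $\SAF(f)\in \ST(|X|)$. By Theorem~\ref{thm:g1saf} this inclusion is equivalent to membership in $G_1(X)$, so the entire content of the theorem is the computation that the SAF invariant of a rank-$\leq 2$ IET lands in the rank-one wedge subspace $\ST(|X|)=\{|X|\wedge v\mid v\in\R\}$. First I would dispose of the trivial case: if $\rank(f)=1$, then all interval lengths $\lam_k$ lie in a one-dimensional $\Q$-subspace, hence are rational multiples of $|X|$ (since $|X|=\sum_k\lam_k$ is itself in that line), so $\SAF(f)=0$ as already noted in the remark preceding the theorem, and $0\in\ST(|X|)$.

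The substantive case is $\rank(f)=2$. Here I would pick a $\Q$-basis $\{u,w\}$ of the length space $L(f)=\spann(\{\lam_1,\dots,\lam_r\})$ and write each length as $\lam_k=a_k u+b_k w$ with $a_k,b_k\in\Q$. The key structural fact I would invoke is that the translation constants $\gam_k=f(x)-x$ on each piece are $\Z$-linear combinations of the breakpoints, and the breakpoints are partial sums of the lengths $\lam_k$; consequently each $\gam_k$ also lies in $L(f)$, so $\gam_k=c_k u+d_k w$ with $c_k,d_k\in\Q$. Expanding
\[
\SAF(f)=\sum_{k=1}^r \lam_k\otimes\gam_k
=\sum_{k=1}^r (a_k u+b_k w)\otimes(c_k u+d_k w)
\]
and collecting terms, the $u\otimes u$ and $w\otimes w$ contributions are symmetric and vanish under the projection to $\ST$, while the mixed terms combine to give a single multiple of $u\wedge w$. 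Thus $\SAF(f)\in \Q\cdot(u\wedge w)=\ST(u)\cap\ST(w)$, a one-dimensional subspace of $\ST$.

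It remains to identify this line with $\ST(|X|)$. Since $|X|=\sum_k\lam_k\in L(f)$ is a nonzero element of the two-dimensional space $L(f)=\spann(u,w)$, I can replace the basis $\{u,w\}$ by $\{|X|,w'\}$ for a suitable second basis vector $w'$; the wedge $u\wedge w$ is then a nonzero rational multiple of $|X|\wedge w'$, which lies in $\ST(|X|)$ by definition. Hence $\SAF(f)\in\ST(|X|)$, and Theorem~\ref{thm:g1saf} yields $f\in G_1(X)$.

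The main obstacle, and the step deserving the most care, is the claim that each translation constant $\gam_k$ lies in $L(f)$ rather than merely in a larger space. This is where one must use that $f$ is a genuine bijection of $X$ onto itself: the endpoints of the image intervals $f(X_k)$ must tile $X$ exactly, so each $\gam_k$ is a difference of two breakpoints of the partition (the left endpoint of $f(X_k)$ minus the left endpoint of $X_k$), and every breakpoint is an integer combination—in fact a sum—of the lengths $\lam_1,\dots,\lam_r$. I would state this as an explicit lemma: for any IET, $\spann(\{\gam_k\})\subset\spann(\{\lam_k\})=L(f)$. Once that containment is secured the rest is the routine bilinear-algebra computation sketched above, and no appeal beyond Theorem~\ref{thm:g1saf} is needed.
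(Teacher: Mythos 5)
Your proof is correct and follows essentially the same route as the paper's: both arguments show $\SAF(f)\in L(f)\wedge_{\Q}L(f)$ (using that the translation constants $\gam_k$ lie in $L(f)$ and that SAF values are skew-symmetric), observe $|X|=\sum_k\lam_k\in L(f)$ so that a basis of $L(f)$ containing $|X|$ is available, conclude $\SAF(f)\in\ST(|X|)$, and finish by Theorem~\ref{thm:g1saf}. The only differences are cosmetic: you work first in an arbitrary basis $\{u,w\}$ and change basis afterwards, and you spell out the justification that each $\gam_k$ lies in $L(f)$, a point the paper's proof leaves implicit (it is stated explicitly only later, in the paper's Section 6).
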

\begin{proof}
We may assume that $\rank(f)=\dim(L(f))=2$. 
Since all $\lam_k\in L(f)$, it follows that $|X|=\sum_{k=1}^{r}\lam_{k}\in L(F)$.
Let $B=\{|X|, u\}$ be a basis in $L(F)$. Then 
\begin{align*}
\SAF(f)\in L(f)\wedge_{\Q}L(f)&=\{q\,|X|\wedge u\mid q\in\Q\}=\\
    &=\{|X|\wedge qu\mid q\in\Q\}\subset\ST(|X|),
\end{align*}
whence $f\in G_1(X)$, in view of  Theorem \ref{thm:g1saf}.
\end{proof}
\section{Criterion for $3$ IETs to lie in $G_1$}
Let $f\in\iets_3(X)$. Then $f$ has $2$ discontinuities, and $f$ acts by reversing the 
order of three subintervals, $X_1$, $X_2$ and $X_3$. Set $\lam_i=|X_i|$. Then 
$X=\dom(f)=[a,a+\lam_1+\lam_1+\lam_3)$, with some $a\in\R$.
Set for convenience $a=0$ so that $X=[0,\lam_1+\lam_1+\lam_3)$.

The corresponding translation constants are easily computed:
\begin{align*}
\gam_{1}&=f(0)-0=\lam_2+\lam_3=|X|-\lam_1; \\ 
\gam_2&=f(\lam_{1})-\lam_{1}=\lam_3-\lam_1; \\
\gam_{3}&=f(\lam_1+\lam_2)-(\lam_1+\lam_2)=0-(\lam_1+\lam_2)=\lam_3-|X|.
\end{align*}
(The above short computation follows general receipt for computing the translation 
constants $\gam_{k}$ in terms of the constants $\lam_{k}$ and the permutation 
specifying the way intervals $X_{k}$ are rearranged, 
see e.g. \cite[Section 0]{Ve_IET}. The permutation here must be $(321)$).

Since $\lam_2=|X|-\lam_1-\lam_3$, we can compute $\SAF(X)$ in terms of linear
combinations of the wedge products involving only $X$, $\lam_1$ and $\lam_3$:
\begin{align}\label{eq:safkx}
\SAF(X)=\sum_{k=1}^3\ \lam_k\otimes\gam_k=
|X|\wedge(\lam_1-\lam_3)-\lam_1\wedge\lam_3.
\end{align}
We need the following known basic fact  (see e.g. \cite[Lemma 3.1]{Vor}).
\begin{lem}\label{lem:vorob}
If $v_1,v_2,\ldots,v_n$ are $n\geq2$ linearly independent real numbers, 
then the $\frac{n(n-1)}2$ wedge products  $v_{i}\wedge v_{j}$, $1\leq i<j\leq n$,
are linearly independent.
\end{lem}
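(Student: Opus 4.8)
The plan is to prove Lemma \ref{lem:vorob} by exhibiting the wedge products $v_i \wedge v_j$ as the images of a fixed linearly independent family under a well-chosen $\Q$-linear map, so that their independence follows from the injectivity-up-to-independence of that map. Concretely, I would extend the linearly independent set $\{v_1,\dots,v_n\}$ to a $\Q$-basis $\mathcal{B}$ of $\R$ (using a Hamel basis / Zorn's lemma), and then recall that $\T = \R\otimes_\Q\R$ has as a $\Q$-basis the set of all tensors $b\otimes b'$ with $b,b'\in\mathcal{B}$. The antisymmetrization $b\wedge b' = b\otimes b' - b'\otimes b$ for $b\neq b'$ in $\mathcal{B}$ then furnishes a linearly independent family in $\ST$, because distinct unordered pairs $\{b,b'\}$ involve disjoint sets of basis tensors of $\T$. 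Since each $v_i\wedge v_j$ with $i<j$ is exactly one of these $b\wedge b'$ (the $v_i$ being among the basis elements $b$), they are linearly independent.

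The key steps, in order, are: first, invoke the existence of a $\Q$-basis of $\R$ containing the given $v_1,\dots,v_n$; second, write down the standard $\Q$-basis $\{b\otimes b'\}$ of the tensor product $\T$ indexed by ordered pairs from $\mathcal{B}$; third, observe that a vanishing rational combination $\sum_{i<j} c_{ij}\,(v_i\otimes v_j - v_j\otimes v_i)=0$ expands into a combination of the distinct basis tensors $v_i\otimes v_j$ and $v_j\otimes v_i$, each appearing exactly once; fourth, conclude from the linear independence of those basis tensors that every coefficient $c_{ij}$ must vanish. This last step is the crux: the antisymmetrized tensors use \emph{disjoint} sets of basis elements of $\T$ precisely because the index pairs $(i,j)$ with $i<j$ are all distinct and the summands $v_i\otimes v_j$, $v_j\otimes v_i$ carry ordered indices that cannot coincide across different unordered pairs.

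The main obstacle, such as it is, is bookkeeping rather than conceptual: one must be careful that in the expansion of $\sum_{i<j} c_{ij}(v_i\otimes v_j - v_j\otimes v_i)$ no two terms collapse or cancel against each other. Because we read off coefficients against the ordered basis $\{v_k\otimes v_\ell\}$ of $\spann_\Q\{v_1,\dots,v_n\}\otimes\spann_\Q\{v_1,\dots,v_n\}$, the tensor $v_i\otimes v_j$ (with $i<j$) receives the coefficient $c_{ij}$ while $v_j\otimes v_i$ receives $-c_{ij}$, and no other summand contributes to either; hence vanishing of the whole sum forces $c_{ij}=0$ for all $i<j$. This argument only needs the restriction of the tensor product to the $n$-dimensional $\Q$-subspace spanned by the $v_k$, so invoking a full Hamel basis of $\R$ is convenient but can be replaced by working inside $\spann_\Q\{v_1,\dots,v_n\}\otimes_\Q\spann_\Q\{v_1,\dots,v_n\}$, which is a clean $n^2$-dimensional $\Q$-vector space with the evident basis.
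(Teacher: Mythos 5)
Your proof is correct. Note that the paper does not actually prove this lemma: it is quoted as a ``known basic fact'' with a citation to \cite[Lemma 3.1]{Vor}, so there is no internal argument to compare against. What you have written is the standard proof of that fact (and essentially the argument in the cited reference): extend $v_1,\dots,v_n$ to a Hamel basis $\mathcal{B}$ of $\R$ over $\Q$, use that the tensors $b\otimes b'$ with $b,b'\in\mathcal{B}$ form a $\Q$-basis of $\T=\R\otimes_{\Q}\R$, and observe that in $\sum_{i<j}c_{ij}(v_i\otimes v_j-v_j\otimes v_i)=0$ each ordered basis tensor $v_i\otimes v_j$, $v_j\otimes v_i$ occurs exactly once, forcing all $c_{ij}=0$; your closing remark that one may instead work inside the $n^2$-dimensional space $\spann(\{v_1,\dots,v_n\})\otimes_{\Q}\spann(\{v_1,\dots,v_n\})$ is also sound, since the inclusion of this subspace into $\T$ is injective (vector spaces over a field are flat, or simply because the chosen basis tensors extend to a basis of $\T$).
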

\begin{cor}\label{cor:basic}
Let $v_1,v_2,v_3\in\R$ be linearly independent. Then  $v_1\wedge v_2\neq v_3\wedge u$
for all $u\in\R$.
\end{cor}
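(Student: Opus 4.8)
The plan is to argue by contradiction: suppose that $v_1\wedge v_2=v_3\wedge u$ for some $u\in\R$, and derive a violation of Lemma \ref{lem:vorob}. The natural dividing line is whether or not $u$ already lies in the span of $v_1,v_2,v_3$, so I would split the argument into two cases according to the position of $u$ relative to $\spann\{v_1,v_2,v_3\}$. Since $v_1,v_2,v_3$ are assumed linearly independent, these two cases are exhaustive.

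First, suppose $u\notin\spann\{v_1,v_2,v_3\}$. Then adjoining $u$ produces four linearly independent reals $v_1,v_2,v_3,u$, and Lemma \ref{lem:vorob} (applied with $n=4$) asserts that the six wedge products $v_i\wedge v_j$ are linearly independent. In particular $v_1\wedge v_2$ and $v_3\wedge u$ are two distinct members of a linearly independent family, hence cannot be equal, since their difference would be a nontrivial linear dependence. This contradicts the assumed equality.

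Second, suppose $u\in\spann\{v_1,v_2,v_3\}$, so that $u=av_1+bv_2+cv_3$ with $a,b,c\in\Q$. I would expand $v_3\wedge u$ by bilinearity, using $v_3\wedge v_3=0$ and the antisymmetry $v_3\wedge v_i=-\,v_i\wedge v_3$, to rewrite the assumed identity purely in terms of the basic products $v_1\wedge v_2$, $v_1\wedge v_3$, $v_2\wedge v_3$. The equality $v_1\wedge v_2=v_3\wedge u$ then becomes a linear relation among these three products in which the coefficient of $v_1\wedge v_2$ equals $1$. By Lemma \ref{lem:vorob} (now with $n=3$) these three products are linearly independent, forcing every coefficient — in particular the coefficient $1$ of $v_1\wedge v_2$ — to vanish, which is absurd.

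Since both cases are impossible, no such $u$ exists, which proves the corollary. The argument is essentially bookkeeping layered on top of Lemma \ref{lem:vorob}; the only point requiring a little care is to verify that the case split is genuinely exhaustive and that in the second case the reduction really isolates $v_1\wedge v_2$ with a nonzero (indeed unit) coefficient, so that linear independence can be invoked to reach a contradiction. Beyond that, no genuine obstacle arises.
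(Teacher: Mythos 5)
Your proposal is correct and follows essentially the same route as the paper: the same contradiction setup, the same case split on whether $u\in\spann(\{v_1,v_2,v_3\})$, and the same two applications of Lemma \ref{lem:vorob} (with $n=4$ and $n=3$ respectively). If anything, your writeup is slightly more careful than the paper's, which leaves the bilinearity expansion implicit and contains a typo in the first case (citing the corollary itself where Lemma \ref{lem:vorob} is meant).
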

\begin{proof}
Assume to the contrary that  
\[
v_1\wedge v_2= v_3\wedge u
\]
for some $u\in\R$. If $u\notin\spann(\{v_1,v_2,v_3\})$, then the set 
$\{u,v_1,v_2,v_3\}$ is linearly independent, contradicting Corollary \ref{cor:basic}.

And if  $u\in\spann(\{v_1,v_2,v_3\})$, 
then $u=q_1v_1+q_2v_2+q_3v_3$ with some $q_i\in\Q$ whence
\[
v_1\wedge v_2=q_1\,v_3\wedge v_1+q_2\,v_3\wedge v_2,
\]
a contradiction with Lemma \ref{lem:vorob} again.
\end{proof}
\begin{lem}\label{lem:ex3}
Let $f\colon X\to X$ be a $3$-IET  and assume that $\rank(f)=3$. (Equivalently,
the set $\{\lam_1,\lam_1,\lam_3\}$ is linearly independent).
Then $f\notin G_1(X)$.
\end{lem}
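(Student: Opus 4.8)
The plan is to use the explicit formula for the SAF invariant of a $3$-IET computed in \eqref{eq:safkx} together with the characterization of $G_1(X)$ membership from Theorem~\ref{thm:g1saf}. By that theorem, $f\in G_1(X)$ if and only if $\SAF(f)\in\ST(|X|)$, i.e. if and only if $\SAF(f)$ can be written as $|X|\wedge u$ for some $u\in\R$. So the goal reduces to showing that when $\{\lam_1,\lam_2,\lam_3\}$ is linearly independent (equivalently $\rank(f)=3$), the element
\[
\SAF(f)=|X|\wedge(\lam_1-\lam_3)-\lam_1\wedge\lam_3
\]
is \emph{not} of the form $|X|\wedge u$.

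First I would rewrite everything in terms of a convenient linearly independent set. Since $|X|=\lam_1+\lam_2+\lam_3$ and $\{\lam_1,\lam_2,\lam_3\}$ is independent, the triple $\{|X|,\lam_1,\lam_3\}$ is also linearly independent (it is obtained from the basis $\{\lam_1,\lam_2,\lam_3\}$ by an invertible rational change of coordinates, replacing $\lam_2$ by $|X|$). This lets me apply the wedge-independence results directly to $|X|$, $\lam_1$, $\lam_3$. The key step is then to suppose for contradiction that $\SAF(f)=|X|\wedge u$ for some $u$, which gives
\[
|X|\wedge(\lam_1-\lam_3-u)=\lam_1\wedge\lam_3,
\]
and to derive a contradiction from this equation.

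The natural tool here is Corollary~\ref{cor:basic}, applied to the linearly independent triple $v_1=\lam_1$, $v_2=\lam_3$, $v_3=|X|$: it asserts precisely that $\lam_1\wedge\lam_3$ cannot equal $|X|\wedge w$ for any $w\in\R$. Taking $w=\lam_1-\lam_3-u$ contradicts the displayed equation, so no such $u$ exists, and hence $\SAF(f)\notin\ST(|X|)$. By Theorem~\ref{thm:g1saf} this yields $f\notin G_1(X)$, as required.

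I do not expect a serious obstacle, since the heavy lifting has already been done: the SAF formula \eqref{eq:safkx}, the membership criterion of Theorem~\ref{thm:g1saf}, and the wedge-product nondegeneracy of Corollary~\ref{cor:basic} combine almost immediately. The only point requiring a moment of care is the reduction verifying that $\{|X|,\lam_1,\lam_3\}$ is linearly independent from the hypothesis on $\{\lam_1,\lam_2,\lam_3\}$, so that Corollary~\ref{cor:basic} is actually applicable; this is a routine change-of-basis observation but is the one place where the $\rank(f)=3$ hypothesis is genuinely used.
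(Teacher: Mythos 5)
Your proof is correct and follows essentially the same route as the paper: both use the formula \eqref{eq:safkx} for $\SAF(f)$, the membership criterion of Theorem~\ref{thm:g1saf}, and Corollary~\ref{cor:basic} applied to the linearly independent triple $\lam_1,\lam_3,|X|$ to reach a contradiction. The only cosmetic difference is that you absorb the term $|X|\wedge(\lam_1-\lam_3)$ explicitly by setting $w=\lam_1-\lam_3-u$, whereas the paper phrases the same step as $\lam_1\wedge\lam_3\in\ST(|X|)$ using that $\ST(|X|)$ is a subspace.
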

\begin{proof}
Assume to the contrary that $f\in G_1(X)$. Then, by Theorem \ref{thm:g1saf},
$\SAF(f)\in\ST(|X|)$. It follows from \eqref{eq:safkx} that  
$\lam_1\wedge\lam_3\in\ST(|X|)$, i.e. that 
\begin{equation}\label{eq:wed}
\lam_1\wedge\lam_3=|X|\wedge u,
\end{equation}
for some $u\in\R$. Since the numbers $\lam_1,\lam_3$ and $|X|=\lam_1+\lam_2+\lam_3$ 
are linearly independent, \eqref{eq:wed} contradicts Corollary \ref{cor:basic}.

\end{proof}
\begin{thm}[Criterion for a $3$-IET to lie in $G_1$]\label{thm:crit}
Let $f\colon X\to X$ be a $3$-IET. Then 
\[
f\in G_1(X)\quad\iff\quad\rank(f)\leq2.
\]
\end{thm}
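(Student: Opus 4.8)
The plan is to prove the biconditional by combining two results already established in the excerpt, one for each direction. The statement characterizes exactly when a $3$-IET lies in $G_1(X)$, and the work has essentially been done in the preceding lemmas; what remains is to assemble them cleanly.

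For the direction $\rank(f)\leq 2 \implies f\in G_1(X)$, I would simply invoke Theorem \ref{thm:r2}, which states precisely that $\rank(f)\leq 2$ implies $f\in G_1(X)$ for any $f\in G(X)$. Since a $3$-IET is in particular an element of $G(X)$, this direction is immediate and requires no further argument. The content here was already packaged in the rank-$2$ result, whose proof used that $\SAF(f)$ lands in $L(f)\wedge_{\Q} L(f)$, and that when $\dim L(f)\leq 2$ with $|X|\in L(f)$, this wedge space sits inside $\ST(|X|)$, so Theorem \ref{thm:g1saf} applies.

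For the converse direction $f\in G_1(X) \implies \rank(f)\leq 2$, I would argue by contraposition using Lemma \ref{lem:ex3}. A $3$-IET has $\rank(f)\in\{1,2,3\}$, so failing $\rank(f)\leq 2$ means exactly $\rank(f)=3$, i.e.\ $\{\lam_1,\lam_2,\lam_3\}$ is linearly independent. Lemma \ref{lem:ex3} then gives $f\notin G_1(X)$ directly. The heart of that lemma was the computation \eqref{eq:safkx} expressing $\SAF(f)=|X|\wedge(\lam_1-\lam_3)-\lam_1\wedge\lam_3$, followed by the observation that membership in $\ST(|X|)$ would force $\lam_1\wedge\lam_3\in\ST(|X|)$, contradicting Corollary \ref{cor:basic} since $\lam_1,\lam_3,|X|$ are linearly independent when $\rank(f)=3$.

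I do not anticipate a genuine obstacle here, since both implications reduce to theorems proved earlier; the only care needed is the bookkeeping observation that for a $3$-IET the three possible rank values split cleanly into the cases $\rank(f)\leq 2$ (covered by Theorem \ref{thm:r2}) and $\rank(f)=3$ (covered by Lemma \ref{lem:ex3}), with no gap between them. The one point worth double-checking is that Lemma \ref{lem:ex3} is stated specifically for $3$-IETs with $\rank(f)=3$, matching exactly the negation of the right-hand condition, so the contrapositive is airtight. Thus the proof is essentially: combine Theorem \ref{thm:r2} for $\Leftarrow$ and Lemma \ref{lem:ex3} for the contrapositive of $\Rightarrow$.
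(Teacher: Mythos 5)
Your proof is correct and is exactly the paper's own argument: the paper's proof of Theorem \ref{thm:crit} likewise cites Theorem \ref{thm:r2} for the $\Leftarrow$ direction and reads the $\Rightarrow$ direction as the contrapositive of Lemma \ref{lem:ex3}. Your additional bookkeeping remark that $\rank(f)\in\{1,2,3\}$ for a $3$-IET, so the two cases exhaust all possibilities, is a fine (if implicit in the paper) sanity check.
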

\begin{proof}
The direction $\Rightarrow$ is a contrapositive restatement of Lemma \ref{lem:ex3}.
The direction $\Leftarrow$\, is given by Theorem \ref{thm:r2}.
\end{proof}
\section{Validation of the membership in classes $G_0(X)$ and $G_1(X)$}
Given $f\in G(X)$, we describe constructive procedures to decide whether 
$f\in G_{per}(X)$ and whether $f\in G_1(X)$.
\subsection{Does the inclusion $f\in G_{per}(X)$ hold?} Since $G_{per}(X)=G_0(X)$ 
(see \eqref{eq:3g}), one only has to test the equality 
\[
\SAF(f):=\sum_{k=1}^r \lam_k\otimes\gam_{k}=0.
\]
We assume that the {\em linear structure} of $f$ is known.
By the linear structure of $f$ we mean:\v1

\hsp5  (a) a basis  $B=\{v_1,v_2,\ldots,v_n\}$ of the finite dimensional space 
\[
       L(f):=\spann(\{\lam_1,\lam_2,\ldots,\lam_r\});
\]

\hsp5  (b) the (unique) linear representations $\lam_{k}=\sum_{i=1}^n q_{k,i}v_i$, 
       $1\leq k\leq r$, with all $q_{k,i}\in\Q$. 
  
Observe that all translation constants $\gam_k$ also lie in $L(f)$, and their linear 
representation in terms of basis $B$ can be computed. This way one can get presentation
\begin{equation}\label{eq:crit}
\SAF(f)=\sum_{1\leq i<j\leq n}p_{i,j}\ v_i\wedge v_j,
\end{equation}
with known $p_{i,j}\in\Q$. By Lemma \ref{lem:vorob}, $\SAF(f)=0$ if and only if 
all the constants $p_{i,j}$ vanish.
\[
f\in G_{per}(X)=G_0(X)\quad
\iff \quad p_{i,j}=0, \text{ for all} \ i,j.
\]
	
\subsection{Does the inclusion $f\in G_1(X)$ hold?}

Again we assume that the linear structure of $f$ is known. 
To answer the question, we proceed as follows.

First, we modify the basis $B$ of $L(f)$  to make $v_1=|X|$. 
Then we proceed just as before and get \eqref{eq:crit} with
known $p_{i,j}\in\Q$. By Theorem \ref{thm:g1saf} and Lemma \ref{lem:vorob}, 
\[
f\in G_1(X)\quad\iff\quad\SAF(f)\in\ST(|X|)\quad
\iff \quad p_{i,j}=0, \text{ for } 2\leq i<j\leq n.
\]
\section{Class $G_1$ is not preserved under induction} 
An IET  $(X,f)$  is called minimal if its every orbit is dense in $X$.
For a sufficient condition for an IET to be minimal see \cite{Kea_IET}.

Let $(X,f)$  be an $r$-IET and assume that $Y\subset X$ is 
a standard subinterval. It is well known (see e.g. \cite{CFS} or \cite{Kea_IET}) that 
the (first return) map  $f_{Y}\colon Y\to Y$ induced by $f$ on $Y$ is also an IETs
(exchanging at most $r+1$ subintervals). It is known that, under the minimality
assumption, the $\SAF$ invariant is preserved under induction.
\begin{prop}\label{prop:indsaf} 
{\em(\cite{Arn}, Part II, Proposition 2.13)}
Let $(X,f)$ be a minimal IET and assume that $Y\subset X$ is 
a standard subinterval. Then 
\[
\SAF(f)=\SAF(f_Y).
\]
\end{prop}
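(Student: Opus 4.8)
The plan is to prove this via the Rokhlin--Kakutani tower decomposition of $(X,f)$ over the subinterval $Y$, exploiting the fact that the $\SAF$ invariant is unchanged when the defining partition of an IET is refined. Indeed, if a continuity interval $X_k$ (on which $f$ translates by $\gamma_k$) is subdivided into $X_k'\sqcup X_k''$, then bilinearity of $\otimes$ gives $|X_k|\otimes\gamma_k=|X_k'|\otimes\gamma_k+|X_k''|\otimes\gamma_k$, so the value $\SAF(f)$ in \eqref{eq:saff} depends only on $f$ and not on the chosen partition into translation pieces. This refinement-invariance is the engine of the whole argument.

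First I would set up the tower. Since $f_Y$ is itself an IET (exchanging finitely many intervals), the first-return time $n(y)$ takes finitely many values, and $Y$ splits into finitely many subintervals on each of which $n$ is constant. Refining further, I obtain a finite partition $Y=\bigsqcup_\alpha Z_\alpha$ into standard subintervals together with return times $n_\alpha$ such that, for each $\alpha$, $f$ acts as a single translation $c_{\alpha,i}$ on every floor $f^i(Z_\alpha)$, $0\le i\le n_\alpha-1$. Two features are then recorded: each floor $f^i(Z_\alpha)$, being the image of $Z_\alpha$ under a composition of translations, has length $|Z_\alpha|$; and, by minimality, the floors $\{f^i(Z_\alpha)\}$ partition all of $X$ (every orbit meets $Y$ and returns, and no part of $X$ escapes the towers).

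The computation then telescopes. Evaluating $\SAF(f)$ on the floor partition (legitimate by refinement-invariance) gives
\[
\SAF(f)=\sum_\alpha\sum_{i=0}^{n_\alpha-1}|Z_\alpha|\otimes c_{\alpha,i}
      =\sum_\alpha |Z_\alpha|\otimes\Big(\sum_{i=0}^{n_\alpha-1}c_{\alpha,i}\Big),
\]
using bilinearity in the second slot. But on $Z_\alpha$ the induced map is $f_Y=f^{n_\alpha}$, a translation by exactly $\sum_{i=0}^{n_\alpha-1}c_{\alpha,i}$; so the right-hand side is precisely $\SAF(f_Y)$ evaluated on the partition $\{Z_\alpha\}$ of $Y$ (again legitimate by refinement-invariance). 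Hence $\SAF(f)=\SAF(f_Y)$.

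The main obstacle is not the final algebra, which is immediate, but the structural input that makes the tower well-behaved: namely that the return-time partition of $Y$ is finite and that the towers exhaust $X$. The former is exactly the statement (quoted above) that $f_Y$ is an IET; the latter is where minimality is essential---without it a positive-length subset of $X$ could have an $f$-orbit never entering $Y$, so the floor sum would compute the $\SAF$ of only a sub-IET of $f$ and the identity would fail. I would therefore be careful to invoke minimality precisely at the exhaustion step and to justify the equal-length property of floors from the fact that each $f^i$ restricts to a translation on $Z_\alpha$.
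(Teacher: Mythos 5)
Your proof is correct. Note, however, that the paper itself gives no argument for this proposition: it is quoted as an external result (Arnoux, Part II, Proposition 2.13), so there is no internal proof to compare against. What you have written is the standard, self-contained tower proof of invariance of $\SAF$ under induction, and its two pillars are exactly the right ones: (i) refinement-invariance of the sum \eqref{eq:saff}, which is immediate from additivity of $\otimes$ in the first slot, and (ii) the Kakutani tower decomposition of $X$ over $Y$, with minimality invoked precisely to show the towers exhaust $X$. Two small points you should tighten if this were written out in full. First, ``every orbit meets $Y$ and returns'' is the right idea but the clean justification of exhaustion uses the \emph{backward} orbit: $f^{-1}$ is also minimal, so for any $x\in X$ there is a least $j\geq 0$ with $f^{-j}(x)\in Y$, and minimality of $j$ places $x$ on floor $j$ of the tower over the piece containing $f^{-j}(x)$, with $j$ strictly below the return time. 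Second, ``partition'' tacitly includes disjointness of the floors, which needs the (easy) remark that $f^{i}(Z_\alpha)\cap f^{j}(Z_\beta)\neq\emptyset$ with $i<j<n_\beta$ would force a return of a point of $Z_\beta$ to $Y$ before time $n_\beta$. With these details filled in, your argument is a complete and essentially optimal proof of the proposition; it also makes transparent why minimality cannot be dropped, since otherwise the towers over $Y$ only capture a sub-IET of $f$ and the identity genuinely fails.
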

\begin{cor} 
Let $(X,f)$ be a minimal IET and assume that $Y\subset X$ is 
a standard subinterval. Then 
\[
f\in G_{per}(X)\quad\iff\quad  f_Y\in G_{per}(X)
\]
\end{cor}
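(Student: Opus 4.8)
The plan is to reduce the biconditional to the single identity $\SAF(f)=\SAF(f_Y)$ furnished by Proposition \ref{prop:indsaf}, combined with the description of the periodic subgroup as the kernel of the SAF homomorphism. First I would recall that, by the equality \eqref{eq:3g}, one has $G_{per}(X)=G_0(X)=\{g\in G(X)\mid\SAF(g)=0\}$; the identical description holds on the subinterval $Y$, that is $G_{per}(Y)=G_0(Y)=\{g\in G(Y)\mid\SAF(g)=0\}$, since Lemma \ref{lem:hom} and \eqref{eq:3g} apply verbatim with $Y$ in place of $X$. (Here I read the right-hand membership in $G(Y)$, the group to which the induced map $f_Y\colon Y\to Y$ actually belongs.)

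Second, I would invoke minimality: because $(X,f)$ is minimal, Proposition \ref{prop:indsaf} gives $\SAF(f)=\SAF(f_Y)$. Chaining the two kernel descriptions through this equality produces the string of equivalences
\[
f\in G_{per}(X)\ \Leftrightarrow\ \SAF(f)=0\ \Leftrightarrow\ \SAF(f_Y)=0\ \Leftrightarrow\ f_Y\in G_{per}(Y),
\]
which is precisely the assertion.

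The step I expect to be the crux is the appeal to minimality, which is exactly what makes Proposition \ref{prop:indsaf} available: the invariance $\SAF(f)=\SAF(f_Y)$ under first-return induction can fail without it, and with it the corollary is immediate. The only other subtlety is the bookkeeping of domains---$f_Y$ is a self-map of $Y$, so the natural periodic subgroup appearing on the right is the intrinsic $G_{per}(Y)\subset G(Y)$ rather than $G_{per}(X)$; the kernel description transfers to $Y$ without change because $\SAF\colon G(Y)\to\ST$ is again a homomorphism whose kernel is $G_{per}(Y)$ by \eqref{eq:3g}.
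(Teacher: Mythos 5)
Your proof is correct and is essentially the paper's own argument: the paper proves this corollary exactly by combining Proposition \ref{prop:indsaf} (which needs minimality) with the identification $G_{per}=G_0$ as the kernel of the $\SAF$ homomorphism from \eqref{eq:3g}. Your extra care in reading the right-hand side as $f_Y\in G_{per}(Y)$ rather than $G_{per}(X)$ is the right call; that is evidently a typo in the paper's statement, and your kernel-chaining makes the domain bookkeeping explicit where the paper leaves it implicit.
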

\begin{proof}
Follows from Proposition \ref{prop:indsaf} and the fact that $G_{per}(X)=G_0(X)$.
\end{proof}

\begin{cor} 
Let $(X,f)$ be a minimal IET and assume that $Y\subset X$ is 
a standard subinterval such that  $\frac{|Y|}{|X|}\in\Q$.
Then 
\[
f\in G_1(X)\quad\iff\quad  f_Y\in G_1(X).
\]
\end{cor}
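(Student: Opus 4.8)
The plan is to combine Proposition \ref{prop:indsaf} (invariance of the SAF invariant under induction for minimal IETs) with the characterization of $G_1$ membership from Theorem \ref{thm:g1saf}. The subtlety is that membership in $G_1(X)$ is governed by the subspace $\ST(|X|)$, which depends on $|X|$, whereas the induced map $f_Y$ lives in $G(Y)$, so its membership in $G_1(Y)$ is governed by $\ST(|Y|)$. The rational commensurability hypothesis $\tfrac{|Y|}{|X|}\in\Q$ is precisely what forces these two subspaces to coincide, and reconciling this wedge-space identity is the key step.

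The first step is to record that, by Proposition \ref{prop:indsaf}, since $(X,f)$ is minimal we have $\SAF(f)=\SAF(f_Y)\in\ST$; call this common value $\sigma$. Next I would apply Theorem \ref{thm:g1saf} on both sides: $f\in G_1(X)$ iff $\sigma\in\ST(|X|)$, and $f_Y\in G_1(Y)$ iff $\sigma\in\ST(|Y|)$. Here I must be slightly careful, since the corollary as stated writes $f_Y\in G_1(X)$; I read this as the natural membership $f_Y\in G_1(Y)$ (equivalently, transport $f_Y$ to $G(X)$ via the canonical isomorphism $\phi_{Y,X}$ of \eqref{eq:phi}), and I will phrase it so the two standard intervals are not conflated. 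The whole statement thus reduces to the claim that $\ST(|X|)=\ST(|Y|)$.

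To prove $\ST(|X|)=\ST(|Y|)$ under $\tfrac{|Y|}{|X|}\in\Q$, write $|Y|=q\,|X|$ with $q\in\Q$, $q\neq0$. For any $v\in\R$ we have $|Y|\wedge v=(q\,|X|)\wedge v=|X|\wedge(qv)$, using bilinearity of the wedge over $\Q$; this shows $\ST(|Y|)\subset\ST(|X|)$. The reverse inclusion is symmetric, using $|X|=q^{-1}|Y|$. Hence the two subspaces are equal, and combining this equality with the two instances of Theorem \ref{thm:g1saf} and the equality $\sigma=\SAF(f)=\SAF(f_Y)$ yields the desired biconditional.

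The step I expect to be the main obstacle is not computational but conceptual: ensuring the comparison between $G_1(X)$ and $G_1(Y)$ is stated on a common footing, since $f$ and $f_Y$ act on different intervals and the target subspaces $\ST(|X|)$ and $\ST(|Y|)$ a priori differ. Once the rationality hypothesis is seen to collapse this difference via the scalar-pull-through identity $|X|\wedge(qv)=(q|X|)\wedge v$, the remainder is a direct invocation of the two earlier results. It is worth emphasizing that minimality is used only to guarantee $\SAF(f)=\SAF(f_Y)$ via Proposition \ref{prop:indsaf}, while the rational commensurability is exactly what is needed to match the $\ST(|X|)$-membership criteria; without it, $f\in G_1(X)$ and $f_Y\in G_1(Y)$ can genuinely diverge.
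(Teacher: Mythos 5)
Your proof is correct and follows exactly the paper's own route: invoke Proposition \ref{prop:indsaf} to get $\SAF(f)=\SAF(f_Y)$, observe that $\frac{|Y|}{|X|}\in\Q$ forces $\ST(|X|)=\ST(|Y|)$, and conclude via the characterization in Theorem \ref{thm:g1saf}. Your additional remark that the statement's ``$f_Y\in G_1(X)$'' should be read as $f_Y\in G_1(Y)$ correctly identifies a typo in the paper, and your explicit verification of $\ST(|X|)=\ST(|Y|)$ simply fills in a step the paper leaves implicit.
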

\begin{proof}
Follows from Proposition \ref{prop:indsaf} because $\ST(|X|)=\ST(|Y|)$ assuming that
$\frac{|Y|}{|X|}\in\Q$.
\end{proof}
\begin{thm}
Let $(X,f)$ be a minimal IET. Then the following two assertions are equivalent:

(a) There exists a standard subinterval $Y\subset X$ such that $f_{Y}\in G_1(Y)$;

(b) $\SAF(X)=u\wedge v$, for some $u,v\in \R$. 
\end{thm}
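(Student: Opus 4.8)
The plan is to reduce the claimed equivalence to the two key results of this section: the induction-invariance $\SAF(f)=\SAF(f_Y)$ for every standard subinterval $Y\subset X$ of a minimal IET (Proposition \ref{prop:indsaf}), and the $\SAF$-membership criterion, which for any standard interval $Z$ reads $g\in G_1(Z)\iff\SAF(g)\in\ST(|Z|)$ (Theorem \ref{thm:g1saf}). Putting these together, the assertion ``$f_Y\in G_1(Y)$'' is equivalent to ``$\SAF(f)\in\ST(|Y|)$'', so I would first rephrase the whole theorem as a question about the single fixed element $\SAF(f)\in\ST$: for which lengths $s$ does $\SAF(f)$ lie in $\ST(s)$, and which such $s$ arise as lengths of standard subintervals of $X$?

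For $(a)\Rightarrow(b)$ the argument is then immediate. Given $Y$ with $f_Y\in G_1(Y)$, Theorem \ref{thm:g1saf} applied on $Y$ gives $\SAF(f_Y)=|Y|\wedge w$ for some $w\in\R$, and Proposition \ref{prop:indsaf} gives $\SAF(f)=\SAF(f_Y)=|Y|\wedge w$, exactly the decomposable form demanded in (b).

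For $(b)\Rightarrow(a)$, write $\SAF(f)=u\wedge v$. If $u\wedge v=0$ I would take $Y=X$: then $f_Y=f$ and $\SAF(f)=0\in\ST(|X|)$, so $f\in G_1(X)$ by Theorem \ref{thm:g1saf} (equivalently $f\in G_0(X)=G_{per}(X)\subset G_1(X)$ by \eqref{eq:3g}). The real work is the case $u\wedge v\neq0$, where $u,v$ are $\Q$-independent and $V=\spann(\{u,v\})$ is a $2$-dimensional $\Q$-subspace of $\R$. The key algebraic fact I would establish is that, for $0\neq s\in\R$,
\[
u\wedge v\in\ST(s)\quad\Longleftrightarrow\quad s\in V .
\]
The direction $\Leftarrow$ is a determinant-one computation over $\Q$: writing $s=au+bv$ with $(a,b)\neq(0,0)$, I solve $ad-bc=1$ and set $w=cu+dv$, so that $s\wedge w=(ad-bc)\,u\wedge v=u\wedge v$. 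The direction $\Rightarrow$ is immediate from Corollary \ref{cor:basic}: were $s\notin V$, the triple $u,v,s$ would be $\Q$-independent and then $u\wedge v\neq s\wedge w$ for all $w$, contradicting $u\wedge v\in\ST(s)$.

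It then remains to realize a suitable length as that of a standard subinterval. Since $\Q u\subset V$ is dense in $\R$, there is some $s\in V$ with $0<s\leq|X|$; taking $Y=[0,s)$ gives a standard subinterval of $X$ with $|Y|=s\in V$, so $\SAF(f)=u\wedge v\in\ST(|Y|)$ by the displayed equivalence. Proposition \ref{prop:indsaf} then yields $\SAF(f_Y)=\SAF(f)\in\ST(|Y|)$, and Theorem \ref{thm:g1saf} gives $f_Y\in G_1(Y)$, establishing (a). The hard part is exactly the equivalence $u\wedge v\in\ST(s)\iff s\in V$ together with the attendant remark that every value in $(0,|X|]$---in particular a point of the dense set $V$---occurs as a subinterval length; the remainder is a mechanical assembly of Proposition \ref{prop:indsaf} and Theorem \ref{thm:g1saf}.
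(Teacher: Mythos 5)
Your proposal is correct and follows essentially the same route as the paper: both directions reduce to combining Proposition \ref{prop:indsaf} with Theorem \ref{thm:g1saf}, and for (b)$\Rightarrow$(a) you, like the paper, realize a subinterval whose length is a (dense set of) rational multiple(s) of $u$ and absorb the rational into the wedge product. Your determinant lemma $u\wedge v\in\ST(s)\iff s\in\spann(\{u,v\})$ is just a mild generalization of the paper's one-line rescaling $u\wedge v=(qu)\wedge(q^{-1}v)=|Y|\wedge(q^{-1}v)$, and its forward direction (via Corollary \ref{cor:basic}) is not actually needed for the theorem.
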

One can show that if (b) of the above theorem holds and $\SAF(X)\neq0$ then $u,v\in L(f)$.
\begin{proof}
(a)$\Rightarrow$(b). $\SAF(X)=\SAF(Y)\in\ST([Y])$  whence  $\SAF(X)=|Y|\wedge v$, for some $v\in\R$.

(b)$\Rightarrow$(a). Let $\SAF(X)=u\wedge v$, for some $u,v\in\R$. Without loss of 
   generality,both $u,v$ can be selected positive (using the identities 
   $u\wedge v=(-v)\wedge u$ and  $1\wedge1=0$). Select $q\in \Q$ so that $0<qu<|X|$. 
   Select any standard subinterval $Y\subset X$ of length $|Y|=qu$. Then 
   $\SAF(f_Y)=u\wedge v=|Y|\wedge (q^{-1}v)\in\SAF(|Y|)$, and hence $f_{Y}\in G_1(Y)$.
\end{proof}

Let $K\subset \R$ be a subfield of reals. An IET $f$ is said to be over $K$ if  
$L(f)\subset K$ (see \eqref{eq:lf}), i.e. if all (lengths of exchanged intervals)
$\lam_k$ lie in $K$.

We complete the paper by the following result. 
\begin{thm}\label{thm:fin}
Let $K$ be a real quadratic number field and let  
$(X,f)$ be a minimal IET over $K$.
Let $Y\subset X$ be a standard subinterval. Then
\[
f_Y\in G_1(Y)\quad\iff\quad |Y|\in K.
\]
\end{thm}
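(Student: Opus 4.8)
The plan is to apply the preceding theorem, which already gives the clean criterion $f_Y \in G_1(Y) \iff \SAF(f) = u \wedge v$ for some $u,v \in \R$, and then translate the condition $\SAF(f) = u\wedge v$ into the arithmetic condition $|Y| \in K$ using the quadratic structure of $K$. So first I would invoke the previous theorem: since $(X,f)$ is minimal, the existence of a subinterval $Y$ with $f_Y \in G_1(Y)$ is equivalent to $\SAF(f)$ being a single wedge product, and moreover (by the remark following that theorem) when $\SAF(f) \neq 0$ the vectors $u,v$ can be taken inside $L(f) \subset K$.

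For the direction $(\Leftarrow)$, suppose $|Y| \in K$. By Proposition~\ref{prop:indsaf}, minimality gives $\SAF(f_Y) = \SAF(f)$, which lies in $K \wedge_\Q K$ since $f$ is over $K$. The key computation is that $K \wedge_\Q K$ is one-dimensional: writing $K = \Q \oplus \Q\theta$ for a quadratic irrational $\theta$, the space $K \wedge_\Q K$ is spanned by the single element $1 \wedge \theta$. Hence $\SAF(f) = c\,(1\wedge\theta)$ for some $c \in \Q$, and because $|Y| \in K$ with $|Y| \notin \Q$ (else $\ST(|Y|)$ would be trivial in the relevant sense), I can rewrite $1 \wedge \theta$ as $|Y| \wedge t$ for a suitable $t \in K$, placing $\SAF(f) \in \ST(|Y|)$ and giving $f_Y \in G_1(Y)$ by Theorem~\ref{thm:g1saf}.

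For the direction $(\Rightarrow)$, assume $f_Y \in G_1(Y)$. If $\SAF(f) = 0$ the statement needs separate care, so set that case aside; when $\SAF(f)\neq 0$, Theorem~\ref{thm:g1saf} gives $\SAF(f_Y) = \SAF(f) \in \ST(|Y|)$, so $\SAF(f) = |Y| \wedge w$ for some $w \in \R$. On the other hand $\SAF(f) = c\,(1\wedge\theta)$ with $c \in \Q\setminus\{0\}$ from the one-dimensionality of $K\wedge_\Q K$ above. Equating the two expressions forces $|Y|$ to lie in $K$: the relation $|Y|\wedge w = c\,(1 \wedge \theta)$ means $|Y| \wedge w$ is a nonzero element of the line $\Q(1\wedge\theta) = K \wedge_\Q K$, and a wedge $|Y| \wedge w$ can land in $K \wedge_\Q K$ nontrivially only if $|Y| \in K$ (otherwise $\{1,\theta,|Y|\}$ would be $\Q$-independent and Lemma~\ref{lem:vorob} would make $|Y|\wedge w$ independent of $1\wedge\theta$ unless $w$ is a rational multiple of $|Y|$, which gives $\SAF(f)=0$).

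The main obstacle I expect is the degenerate case $\SAF(f) = 0$, where the clean wedge-equation argument collapses. When $\SAF(f) = 0$ we have $f \in G_0(X) = G_{per}(X)$, so $f_Y \in G_1(Y)$ holds automatically for \emph{every} $Y$; I must then verify separately that $|Y| \in K$ is still forced, which it need not be from $\SAF$ alone — this suggests the theorem may implicitly assume $\SAF(f) \neq 0$, or that the quadratic hypothesis on $K$ and minimality conspire to pin down $|Y|$ through the finer structure of the induced map rather than through $\SAF$. Resolving exactly how the case $\SAF(f)=0$ is handled (whether by an additional hypothesis, by the remark's restriction to $\SAF(f)\neq 0$, or by a direct argument) is the delicate point; the nondegenerate case is a routine consequence of $\dim_\Q(K\wedge_\Q K) = 1$.
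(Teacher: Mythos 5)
Your handling of the nondegenerate case is essentially the paper's argument: both directions come down to Proposition \ref{prop:indsaf}, Theorem \ref{thm:g1saf}, the one-dimensionality of $K\wedge_\Q K$ (the paper phrases this by choosing a basis $\{|Y|,u\}$, resp.\ $\{|X|,u\}$, of $K$), and Corollary \ref{cor:basic}. Two small blemishes there: the parenthetical in your $(\Leftarrow)$ direction claiming $|Y|\notin\Q$ is needed is confused --- the argument works verbatim when $|Y|\in\Q$, since then $|Y|\wedge v=1\wedge(|Y|\,v)$, so $\ST(|Y|)$ is not ``trivial'' --- and your opening appeal to the preceding unnumbered theorem is off-target, since that theorem characterizes the existence of \emph{some} good subinterval $Y$, whereas the present statement concerns a \emph{given} $Y$; fortunately you never actually use it.

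The genuine gap is the case $\SAF(f)=0$, which you correctly flagged as the delicate point but left unresolved, and your suggestion that the theorem ``may implicitly assume $\SAF(f)\neq0$'' is wrong: no such assumption is needed, because the hypotheses force $\SAF(f)\neq0$, and supplying this is precisely the nontrivial content of the paper's proof of $(\Rightarrow)$. Namely: since $f$ is minimal it is not periodic, so $\rank(f)\neq 1$; combined with $\rank(f)=\dim_\Q L(f)\leq\dim_\Q K=2$ this gives $\rank(f)=2$ and $L(f)=K$. By the author's earlier result \cite{Bo_rank2}, a minimal rank $2$ IET is uniquely ergodic, and by McMullen's theorem \cite[Theorem 2.1]{McM} a uniquely ergodic IET over a real quadratic field has nonvanishing SAF invariant (McMullen's Galois flux coincides with SAF in this setting). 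These two external theorems are exactly what your proposal lacks, and without them the statement you are trying to prove would fail in the degenerate case, as you yourself observe: $\SAF(f)=0$ would put $f_Y$ in $G_0(Y)\subset G_1(Y)$ for \emph{every} standard subinterval $Y$, including those with $|Y|\notin K$. So the missing ingredient is not a hidden hypothesis or a finer analysis of the induced map, but the chain minimality $\Rightarrow$ unique ergodicity (rank $2$) $\Rightarrow$ $\SAF(f)\neq0$ (quadratic field); once that is in place, your wedge-product argument for $(\Rightarrow)$ goes through and agrees with the paper's.
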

\begin{proof}
Since $\rank(f)=\dim(L(f))\leq\dim(F)=2$, and $\rank(f)\neq1$ (because $f$ is not 
periodic), we conclude that $\rank(f)=2$ and $L(f)=K$. 

{\bf Proof of the $\Leftarrow$ \ implication}.
Select a basis $B=\{|Y|,u\}$ in $K$. Then
\[
\SAF(f_{Y})=\SAF(f)\in K\wedge K=\{q\,|Y|\wedge u\mid q\in\Q\}\subset \ST(|Y|).
\]
By Theorem \ref{thm:g1saf}, $f_{Y}\in G_1(Y)$.\v1

{\bf Proof of the $\Rightarrow$ implication}. 
It has been proved in \cite{Bo_rank2} that minimal rank $2$ IETs must be 
uniquely ergodic. Thus $f$ is uniquely ergodic.
By McMullen theorem \cite[Theorem 2.1]{McM}, $\SAF(f)\neq0$. (McMullen uses the 
``Galois flux'' invariant for the IETs over a quadratic number field which, 
in his setting, is equivalent to the SAF invariant). 

Select a basis $B=\{|X|,u\}$ in $K$.
Then 
\[
\SAF(f_{Y})=\SAF(f)\in K\wedge K=\{q\,|X|\wedge u\mid q\in\Q\}.
\]
Since $\SAF(f)\neq0$, $\SAF(f_{Y})=q\,|X|\wedge u$, with some $q\in\Q$, $q\neq0$.

By Theorem \ref{thm:g1saf}, $f_Y\in G_1(Y)$ implies that
$q\,|X|\wedge u=|Y|\wedge v$,
for some $v\in\R$. This is incompatible with the assumption $Y\notin K$ in view 
of Corollary  \ref{cor:basic}, completing the proof.
\end{proof}

\v9


\end{document}